\documentclass[a4paper,12pt]{article}

% PACKAGES
\usepackage[bookmarks,pagebackref]{hyperref}
\usepackage{amsrefs}
\usepackage[utf8]{inputenc}
\usepackage[english]{babel}
\usepackage{amssymb}
\usepackage{amsmath}
\usepackage{latexsym}
\usepackage{amsthm}
\usepackage{thmtools,thm-restate}
\usepackage{eucal}
\usepackage{bbm}
\usepackage{chngcntr}
\usepackage{apptools}
\usepackage{mathtools}
\usepackage{authblk}
\usepackage[pdflatex]{crop}
\usepackage[dvipsnames]{xcolor}
\usepackage{tikz}
\usepackage{tikz-cd}
\usepackage{microtype}
\usepackage{enumitem}
\usepackage{graphicx}
\usepackage{mathrsfs}
\usepackage[colorinlistoftodos]{todonotes}
\usepackage{yfonts}
\usepackage{tabularx}
\usepackage{color}
\usepackage{enumitem}

\allowdisplaybreaks

% PAGE SETTINGS
%\setkomafont{disposition}{\normalfont\bfseries}
\setlength{\jot}{2ex}
%\linespread{1.2} 
%\setlength{\parindent}{0pt}

% CHANGE AUTHOR, TITLE, ETC HERE!
\newcommand{\auth}[0]{{Renan Assimos}}
\newcommand{\tit}[0]{{On the intersection of minimal hypersurfaces of $S^k$}}
\newcommand{\kw}[0]{{Minimal surfaces, maximum principle, Frankel's theorem, two-piece property, first eigenvalue of minimal surfaces, Yau's conjecture}}

% PDF METADATA
\hypersetup{
pdfauthor={\auth},%
pdftitle={\tit},%
colorlinks, linktocpage=true, pdfstartpage=1, pdfstartview=FitV,%
breaklinks=true, pdfpagemode=UseNone, pageanchor=true, pdfpagemode=UseOutlines,% 
plainpages=false, bookmarksnumbered, bookmarksopen=true, bookmarksopenlevel=1,%
hypertexnames=true, pdfhighlight=/O,%
urlcolor=black, linkcolor=black, citecolor=black, %}
}
\pdfinfo{%
  /Title    (\tit)
  /Author   (\auth)
  /Creator  (\auth)
  /Subject  (Differential Geometry, Geometric Analysis)
  /Keywords (\kw)
}

%% MATH %%

\numberwithin{equation}{section}

% ENVIRONMENTS
\theoremstyle{plain}

\newtheorem*{theorem*}{Theorem}
\AtAppendix{\counterwithin{thm}{section}}

\newtheorem*{defn*}{Definition}

 %% take care, this is just because we are using the restatable, delete this in case we copy the template for a new paper!!

\theoremstyle{definition}

% ALGEBRA

%\DeclareMathOperator{\ker}{ker} %already there

\DeclareMathOperator{\test}{span}
\DeclareMathOperator{\dist}{dist}

% NUMBERS
\newcommand{\Z}{\mathbb{Z}}

\newcommand{\R}{\mathbb{R}}

% LIE ALGEBRAS

% Analysis

% Draft packages
%\usepackage[draft]{fixme}
%\usepackage[draft]{showkeys}

% fix spacing issues with \left and \right
\let\originalleft\left
\let\originalright\right
\renewcommand{\left}{\mathopen{}\mathclose\bgroup\originalleft}
\renewcommand{\right}{\aftergroup\egroup\originalright}

% enumerate stuff

% Random Commands I like

% AUTHOR TITLE ETC - use variables defined above
\title{\tit}
\author{\auth\thanks{Correspondence: \href{mailto:assimos@mis.mpg.de}{assimos@mis.mpg.de}}}
\affil{\small Max Planck Institute for Mathematics in the Sciences\\ Leipzig, Germany}
\date{}

\begin{document}

\maketitle

\begin{abstract}
 It is known since the work of Frankel \cite{frankel66}, that two compactly immersed minimal hypersurfaces in a manifold with positive Ricci curvature must have an intersection point. Several generalizations of this result can be found in the literature, for example in the works of Lawson \cite{lawson70unknottedness}, Petersen and Wilhelm \cite{petersen2003}, among others. In the special case of minimal hypersurfaces of $S^k$, we prove a stronger version of Frankel's theorem. Namely, we show that if two compact minimal hypersurfaces $M_1$, $M_2$ of $S^k$ and a point $\mathbf{p}\in S^k$ are given, then $M_1$ and $M_2$ have an intersection point in the hemisphere with respect to $\mathbf{p}$. As a corollary of this result, we give an alternative proof to Ros' two-piece property of minimal surfaces of $S^3$ \cite{ros1995two}, for the general dimension case. %%The latest is an immediate consequence of Yau's conjecture on the smallest eigenvalue of the Laplacian of a minimal hypersurface of $S^k$, therefore, analogously to Ros' result, an independent proof for the two-piece property is another strong evidence that Yau's conjecture holds true.       
\end{abstract}

\textbf{Keywords: }{Minimal surfaces, maximum principle, Frankel's theorem, Two-piece property of minimal hypersurfaces, first eigenvalue of minimal surfaces, Yau's conjecture.}

\section{Introduction}

In differential geometry, minimal surfaces are among the most studied objects. Their geometrical and analytical properties are particularly fascinating when they lie in spaces of constant curvature, such as the Euclidean space $\R^3$ and the sphere $S^3$. The present paper focus on the k-dimensional analog of the second case and the intersection properties of minimal hypersurfaces therein. Two very interesting surveys on minimal surfaces of $S^3$ are the ones by Brendle \cite{Brendle2013survey} and Choe \cite{choe2006}. We follow the first while introducing concepts with an addendum: Every manifold in the present paper is considered to be connected, complete and without boundary, unless otherwise stated.

Minimal hypersurfaces of spheres, and more generally of complete Riemannian manifolds with positive Ricci curvature, possess intersection properties. In 1966, one of the first and most remarkable results about intersections of immersed minimal hypersufaces was found by Frankel \cite{frankel66}.

\begin{theorem*}[Frankel's Theorem] 
	 Let $M_1^{k-1}$ and $M_2^{k-1}$ be immersed minimal hypersurfaces in a Riemannian manifold $N^k$ of positive Ricci curvature.  If $M_1$ is compact, then $M_1$ and $M_2$ must intersect.
\end{theorem*}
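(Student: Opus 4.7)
The plan is to argue by contradiction. Suppose $M_1 \cap M_2 = \emptyset$. Using that $M_1$ is compact and $N$ is complete, a Hopf--Rinow type argument produces a unit-speed minimizing geodesic $\gamma : [0,L] \to N$ realizing $d(M_1, M_2) > 0$, with $\gamma(0) = p_1 \in M_1$ and $\gamma(L) = p_2 \in M_2$. The first variation of arc length, applied to any variation of $\gamma$ whose endpoints stay on $M_1$ and $M_2$, forces $\gamma'(0) \perp T_{p_1} M_1$ and $\gamma'(L) \perp T_{p_2} M_2$. Since both submanifolds have codimension one, $T_{p_1} M_1$ and $T_{p_2} M_2$ coincide with the orthogonal complements of $\gamma'$ at the respective endpoints.

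The second step is to construct a family of test variations whose variation fields are parallel along $\gamma$. Pick an orthonormal basis $e_1, \ldots, e_{k-1}$ of $T_{p_1} M_1$ and parallel-transport each $e_i$ along $\gamma$ to obtain orthonormal vector fields $E_i$. Because $\gamma'$ is parallel along itself, each $E_i(L)$ is perpendicular to $\gamma'(L)$ and therefore lies in $T_{p_2} M_2$. For each $i$ one can then exhibit a proper variation $\alpha_s^{(i)}$ of $\gamma$ whose endpoint curves are geodesics of $M_1$ and $M_2$ with initial velocities $E_i(0)$ and $E_i(L)$, and whose variation field along $\gamma$ is $E_i$.

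The final step is the second variation computation. Since $E_i$ is parallel, the interior integrand $|\nabla_{\gamma'} E_i|^2$ vanishes, and the boundary contribution reads $\langle \mathrm{II}_{M_2}(E_i(L), E_i(L)), \gamma'(L)\rangle - \langle \mathrm{II}_{M_1}(E_i(0), E_i(0)), \gamma'(0)\rangle$, reflecting the acceleration of the endpoint geodesics inside the respective hypersurfaces. Summing over $i = 1, \ldots, k-1$, the boundary terms assemble into $\langle H_{M_2}, \gamma'(L)\rangle - \langle H_{M_1}, \gamma'(0)\rangle$, which vanishes by minimality, while the interior parts sum to $-\int_0^L \mathrm{Ric}(\gamma', \gamma')\, dt < 0$ by positivity of Ricci curvature. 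Hence at least one test variation has strictly negative second derivative of length, producing a nearby curve from $M_1$ to $M_2$ strictly shorter than $\gamma$, a contradiction. The main obstacle is purely technical: justifying the existence of the minimizing geodesic realizing $d(M_1, M_2)$ when $M_2$ is not compact, which rests on the compactness of $M_1$ together with the completeness conventions on $N$ and $M_2$; once that is secured, the remainder is a clean assembly of the second variation formula exploiting minimality and positivity of Ricci.
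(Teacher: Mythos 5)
Your argument is correct, but note that the paper does not actually prove this statement: Frankel's Theorem is quoted in the introduction as background, with the proof deferred to the citation of Frankel's 1966 paper. So there is no in-paper proof to compare against. What you have written is, in substance, Frankel's original argument (a Synge-type second variation computation), and it is the right approach for the general positive-Ricci setting: assuming $M_1\cap M_2=\emptyset$, take a minimizing geodesic $\gamma$ from $M_1$ to $M_2$, use the first variation to get orthogonality at both endpoints, parallel-transport an orthonormal basis of $T_{p_1}M_1$ along $\gamma$ (which lands in $T_{p_2}M_2$ by codimension one), and sum the second variations: the boundary terms assemble into traces of the second fundamental forms paired with $\gamma'$, hence vanish by minimality, while the interior terms sum to $-\int_0^L \mathrm{Ric}(\gamma',\gamma')\,dt<0$, contradicting minimality of $\gamma$. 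All the individual steps check out, including the sign bookkeeping in the boundary term.

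It is worth contrasting this with the techniques the paper actually uses for its own intersection results in $S^k$ (Theorems 2--4): there the mechanism is the strong maximum principle combined with sweeping by a one-parameter family of isometries (rotations) or by foliations of totally geodesic equators. That method exploits the large isometry group of the round sphere and yields the sharper ``intersection in every closed hemisphere'' statement, but it does not generalize to an arbitrary manifold of positive Ricci curvature, where your variational argument is the natural one. The only point you should tighten is the one you flag yourself: the existence of a geodesic realizing $\mathrm{dist}(M_1,M_2)$ requires more than completeness of $M_2$ as an abstract Riemannian manifold when the immersion is not proper (the infimum could a priori be attained only in the closure of the image). Under the paper's conventions one typically assumes $M_2$ properly immersed, or closed as a subset of $N$ (note $N$ is compact by Bonnet--Myers), in which case the minimizing geodesic exists and the rest of your argument goes through.
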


\noindent While studying the regularity of harmonic maps into spheres, Jost, Xin and Yang \cite{jost2012} have obtained the following theorem, which has many applications to Bernstein problems \cite{jost2012}, \cite{assimos2018}, \cite{ding2018bernstein}.

\begin{restatable}{lemma}{primelemma}(Adapted from \cite{jost2012}) \label{minimal intersects half equator}
	For any $\pmb{\nu}\in S^k$, take $\pmb{\eta}\in S^{k-1}[\pmb{\nu}]$ and define $S^{k-1}_{+,\pmb{\eta}}[\pmb{\nu}] := S^{k-1}[\pmb{\nu}]\cap \overline{H_{+}(\pmb{\eta})}$ (see Figure~\ref{Fig1}). Then any $M^{k-1}\subset S^k$ compact immersed minimal hypersurface must intersect $S^{k-1}_{+,\pmb{\eta}}[\pmb{\nu}]$.
\end{restatable}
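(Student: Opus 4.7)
The plan is proof by contradiction via the strong maximum principle for minimal hypersurfaces, applied within a pencil of totally geodesic $(k-1)$-spheres rotating about the boundary of the prescribed half-equator. Assume $M\cap S^{k-1}_{+,\pmb{\eta}}[\pmb{\nu}]=\emptyset$, and set
\[
A:=\{x\in S^k:\langle x,\pmb{\nu}\rangle=\langle x,\pmb{\eta}\rangle=0\},
\]
the totally geodesic $(k-2)$-sphere which forms the boundary of the half-equator. For $\theta\in\R/2\pi\Z$ consider
\[
\Sigma_\theta:=\{x\in S^k:\cos\theta\,\langle x,\pmb{\nu}\rangle-\sin\theta\,\langle x,\pmb{\eta}\rangle=0\},
\]
a totally geodesic great $(k-1)$-sphere containing $A$, and let $\Sigma_\theta^+:=\{x\in\Sigma_\theta:\sin\theta\,\langle x,\pmb{\nu}\rangle+\cos\theta\,\langle x,\pmb{\eta}\rangle\ge 0\}$. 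Then $\Sigma_0^+=S^{k-1}_{+,\pmb{\eta}}[\pmb{\nu}]$ and $\Sigma_\pi^+=S^{k-1}_{-,\pmb{\eta}}[\pmb{\nu}]$, and the family $\{\Sigma_\theta^+\}_\theta$ sweeps $S^k$ as $\theta$ varies.

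Since $A\subset\Sigma_0^+$, the hypothesis forces $M\cap A=\emptyset$, so there is a well-defined continuous map $\theta:M\to\R/2\pi\Z$ sending $x$ to the unique $\theta(x)$ with $x\in\Sigma_{\theta(x)}^+$. By assumption $0\notin\theta(M)$. Applying Frankel's theorem to $M$ and the compact totally geodesic minimal hypersurface $\Sigma_0$ produces a point $x_0\in M\cap\Sigma_0$; by assumption $x_0\notin\Sigma_0^+$, so $\langle x_0,\pmb{\eta}\rangle<0$ and $\theta(x_0)=\pi$. Therefore $\theta(M)$ is a compact connected subset of the circle containing $\pi$ and missing $0$, i.e.\ a closed arc $[\alpha,\beta]$ with $\alpha,\beta\neq 0$ and $\pi\in[\alpha,\beta]$.

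Pick an endpoint $\gamma\in\{\alpha,\beta\}$ and a point $p\in M$ with $\theta(p)=\gamma$. Because $\gamma$ is a boundary value of $\theta(M)$, values of $\theta|_M$ near $p$ all lie on a single side of $\gamma$ in $S^1$ (the side toward $\pi$, away from $0$); hence $M$ lies locally on one side of the smooth hypersurface $\Sigma_\gamma$ and is tangent to it at $p$. Since $\Sigma_\gamma$ is totally geodesic and $M$ is minimal, the interior one-sided maximum principle for minimal hypersurfaces forces $M$ and $\Sigma_\gamma$ to coincide in a neighborhood of $p$. Real-analytic continuation combined with connectedness of $M$ and equality of dimensions yields $M=\Sigma_\gamma$ as subsets of $S^k$. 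But then $A\subset\Sigma_\gamma=M$, contradicting $M\cap A=\emptyset$ established above.

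The main technical points I expect to work out carefully are (i) the construction and continuity of the angular coordinate $\theta$, which relies on first ruling out the intersection $M\cap A$ using that $A$ already lies in $\Sigma_0^+$, and (ii) the extraction of a clean one-sided tangency at the endpoint $\gamma$ so as to invoke the strong maximum principle. Once these ingredients are secured, the rest is a Frankel-type argument, performed here across a rotating pencil of totally geodesic hypersurfaces rather than between two fixed ones.
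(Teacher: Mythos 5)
Your proof is correct and follows essentially the same route as the paper: you sweep by totally geodesic half-equators rotating about the common boundary $S^{k-2}[\pmb{\nu},\pmb{\eta}]$, locate an extremal leaf that $M$ touches at an interior one-sided tangency, and conclude with the strong maximum principle, with your angular function $\theta$ serving as a cleaner bookkeeping device for the paper's first intersection angle $\theta^*$. The only superfluous step is the appeal to Frankel's theorem: a nonempty compact arc $\theta(M)\subset(0,2\pi)$ already has endpoints, so you do not need to know that $\pi\in\theta(M)$.
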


\begin{figure}[ht]
	\begin{picture}(100,160)
	\put(120,0){\includegraphics[width=0.4\linewidth]{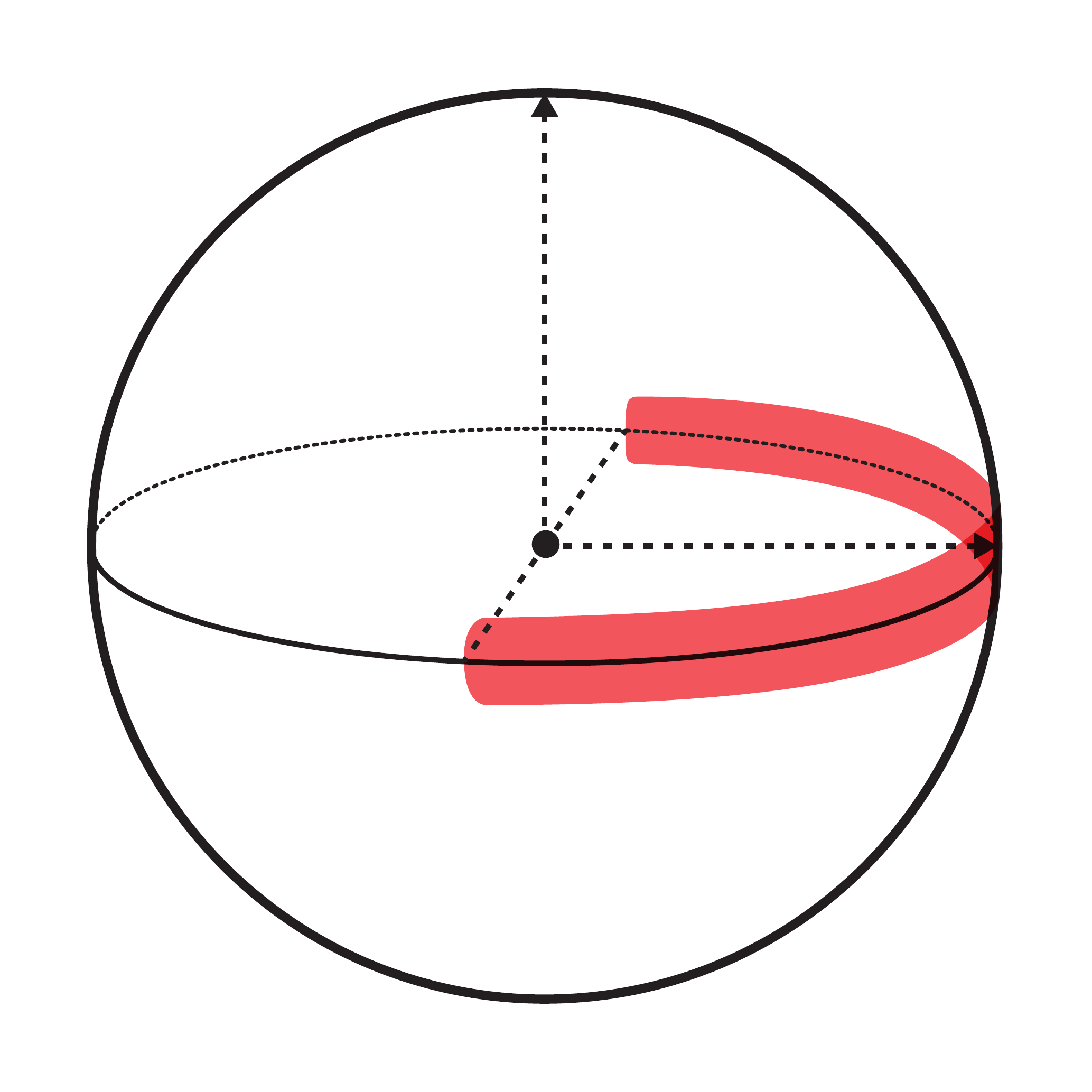}}
	\put(210,45){\color{red}$S^{k-1}_{+,\pmb{\eta}}[\pmb{\nu}]$}
	\put(148,49){$S^{k-1}[\pmb{\nu}]$}
	\put(197,148){$\pmb{\nu}$}
	\put(270,80){$\pmb{\eta}$}
	\end{picture}
	\caption{Half of the $(k-1)$-dimensional equator $S^{k-1}[v]$ in the positive direction of $\pmb{\eta}$, denoted by $S^{k-1}_{+,\pmb{\eta}}[\pmb{\nu}]$.}
	\label{Fig1}
\end{figure}

\noindent The precise meaning of the above notations are given in the next section, by now we build intuition looking at Figure~\ref{Fig1}. 

We are adapting the above result to our case of interest, which are the intersection between minimal hypersurfaces, but Theorem~\ref{minimal intersects half equator} remains true if we change $M$ by the image of any non-constant harmonic map defined on a closed Riemannian manifold. Nevertheless, looking at this theorem from `Frankel's perspective', if we have one of the minimal hypersurfaces being a totally geodesic $(k-1)$-equator, then we have that in the hemisphere with respect to any point of $S^k$, there is an intersection point between such $(k-1)$-equator and any given immersed minimal hypersurface. Using the same line of thought, the main theorem of the present paper can be seen as the generalization of Theorem~\ref{minimal intersects half equator} to the case where we consider intersections between an immersed minimal hypersurface and just `half' of the other. Let us state it as follows.

\begin{restatable}{lemma}{primelemmaa}\label{intersection of minimal hypersurfaces}
	Let $M_1^{k-1}$ and $M_2^{k-1}$ be immersed minimal hypersurfaces of $S^k$. For any $\pmb{\nu}\in S^k$, we have that\begin{equation*}
	M_1\cap M_2\cap \overline{H_+(\pmb{\nu})}\neq \emptyset,
	\end{equation*}
	where $\overline{H_+(\pmb{\nu})}$ is the closed hemisphere in the positive direction of $\pmb{\nu}$.
\end{restatable}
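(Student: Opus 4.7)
The plan is to argue by contradiction, adapting Frankel's second variation of arc length to the closed hemisphere $\overline{H_+(\pmb{\nu})}$. Assume $M_1\cap M_2\cap\overline{H_+(\pmb{\nu})}=\emptyset$ and set $A_i:=M_i\cap\overline{H_+(\pmb{\nu})}$. First, neither $A_i$ is empty: the restriction of $f(x)=\langle x,\pmb{\nu}\rangle$ to the minimal hypersurface $M_i^{k-1}\subset S^k$ is a Laplace eigenfunction with eigenvalue $k-1$, so $\int_{M_i}f\,dA=0$ and $f$ must change sign on $M_i$ (unless it vanishes identically, in which case $M_i\subset S^{k-1}[\pmb{\nu}]\subset\overline{H_+(\pmb{\nu})}$). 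Hence $A_1$ and $A_2$ are non-empty, compact, and, by assumption, disjoint, with $L:=\dist(A_1,A_2)>0$. Since $\overline{H_+(\pmb{\nu})}$ is geodesically convex in $S^k$, there exists a length-minimizing geodesic $\gamma:[0,L]\to\overline{H_+(\pmb{\nu})}$ with $\gamma(0)=p\in A_1$ and $\gamma(L)=q\in A_2$.

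In the favorable case $p,q\notin S^{k-1}[\pmb{\nu}]$, first variation forces $\gamma'(0)\perp T_pM_1$ and $\gamma'(L)\perp T_qM_2$. Parallel-transporting an orthonormal basis $\{e_j\}_{j=1}^{k-1}$ of $T_pM_1$ along $\gamma$ yields fields $E_j$ still perpendicular to $\gamma'$, hence satisfying $E_j(L)\in T_qM_2$. Summing the second variations of arc length over $j$, the boundary contributions assemble into the mean curvatures of $M_1$ at $p$ and of $M_2$ at $q$, both of which vanish by minimality, while the interior integrand reduces to $-1$ because $S^k$ has constant sectional curvature $1$ and each $E_j$ is parallel. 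We obtain $\sum_{j=1}^{k-1}\delta^2L(E_j)=-(k-1)L<0$, so some variation strictly shortens $\gamma$, contradicting its minimality.

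The principal obstacle is the boundary case where $p$ or $q$ lies on the equator $S^{k-1}[\pmb{\nu}]=\partial\overline{H_+(\pmb{\nu})}$. There, first variation only yields $\gamma'(0)\perp T_pM_1\cap\pmb{\nu}^\perp$, so orthogonality to the whole of $T_pM_1$ is lost; worse still, if $\gamma$ starts tangent to the equator then, the equator being totally geodesic, $\gamma$ stays inside $S^{k-1}[\pmb{\nu}]$ and the problem reduces to a lower-dimensional one. To bypass this, I would argue by density and continuity: the hypothesis $M_1\cap M_2\cap\overline{H_+(\pmb{\nu}')}=\emptyset$ is stable under small perturbations $\pmb{\nu}'$ of $\pmb{\nu}$ by compactness of $M_1\cap M_2$, while the desired conclusion is a closed condition in $\pmb{\nu}$ for the same reason; it therefore suffices to prove the theorem on a dense set of $\pmb{\nu}$ for which $M_1$ and $M_2$ are transverse to $S^{k-1}[\pmb{\nu}']$ and the minimizing geodesic can be chosen with interior endpoints, reducing matters to the previous paragraph. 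The residual tangential sub-case $\gamma\subset S^{k-1}[\pmb{\nu}]$ I would dispose of by induction on $k$, together with Lemma~\ref{minimal intersects half equator} applied inside the equatorial subsphere. Making the genericity rigorous, and ruling out configurations in which every minimizer is forced onto the equator, is where the specific geometry of the closed hemisphere is used most essentially and is the main technical hurdle.
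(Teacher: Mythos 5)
Your reduction to the classical Frankel second-variation argument founders on exactly the point you yourself flag as the ``principal obstacle,'' and the proposed genericity fix does not repair it. Under the contradiction hypothesis, the paper's Lemma~\ref{lemma1} (applied with $r=\pi/2$) shows that a pair of points realizing $\dist(A_1,A_2)$ is \emph{forced} to have at least one endpoint on the equator $S^{k-1}[\pmb{\nu}]$: if both endpoints were interior, one could rotate $M_1$ in the plane spanned by the two closest points until first contact with $M_2$ at an interior point and invoke the maximum principle. So your ``favorable case'' is not the generic case but the impossible one, and no perturbation of $\pmb{\nu}$ will produce a minimizing geodesic with interior endpoints: transversality of the $M_i$ to $S^{k-1}[\pmb{\nu}']$ has no bearing on where the closest pair of points sits, and you give no argument that the set of $\pmb{\nu}'$ admitting interior minimizers is dense --- by Lemma~\ref{lemma1} it is empty whenever the contradiction hypothesis holds and both hypersurfaces meet the open hemisphere. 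The induction-on-$k$ disposal of the tangential sub-case is likewise only a pointer: the sets $M_i\cap S^{k-1}[\pmb{\nu}]$ need not be minimal hypersurfaces of the equatorial sphere, so there is nothing to which an inductive hypothesis applies. What is sound in your write-up is the non-emptiness of the $A_i$ (via $\Delta_{M_i}\langle x,\pmb{\nu}\rangle=-(k-1)\langle x,\pmb{\nu}\rangle$, a clean alternative to invoking Theorem~\ref{minimal intersects half equator}) and the interior second-variation computation itself; but the entire content of the theorem, beyond Frankel's original statement, lives in the boundary case, and that is left open.

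For comparison, the paper's proof embraces the boundary case rather than perturbing it away. After Lemma~\ref{lemma1} places a distance-realizing point $\mathbf{a}$ of $M_1$ on the equator, one chooses closest points $\mathbf{\tilde{a}}\in M_1$, $\mathbf{\tilde{b}}\in M_2$ inside the half-equator $S^{k-1}_{+,\mathbf{a}}[\pmb{\nu}]$ and rotates $M_1$ in the plane $\pi(\mathbf{\tilde{a}},\mathbf{\tilde{b}})$; this rotation fixes a $(k-2)$-equator containing $\mathbf{a}$, so $M_1(\theta)$ sweeps toward $M_2$ inside $\overline{H_+(\pmb{\nu})}$ without the contact point escaping to the boundary, and at the first touching the maximum principle forces $M_1(\theta^*)=M_2$, contradicting the standing assumption that $M_1$ is not a rigid motion of $M_2$. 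If you wish to salvage a variational proof, you would need a free-boundary version of the second variation along the equator, which is substantially more delicate than the perturbation you sketch.
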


\noindent Therefore, in the special case of the sphere $S^k$, a stronger intersection property of compact immersed minimal hypersurfaces than the one of Frankel holds. Moreover, as we will see in the last section, an easy manipulation of Theorem~\ref{intersection of minimal hypersurfaces} gives a proof of Ros' two-piece property of compact embedded minimal hypersurfaces of spheres \cite{ros1995two}.

\begin{restatable}{lemma}{primelemmaaa}(Two-piece property) \label{twopiece property}
	Let $M^{k-1}$ be a connected, compact, embedded minimal hypersurface of $S^k$. For any $\pmb{\nu}\in S^k$, we have that $M_+(\pmb{\nu}):= M \cap H_{+}(\pmb{\nu})$ is connected. In other words, any $(k-1)$-dimensional equator devides $M$ in two connected pieces.
\end{restatable}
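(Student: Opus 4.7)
The plan is to argue by contradiction via a reflection-and-double construction: if $M_+(\pmb{\nu})$ split into several connected components, I will produce two \emph{disjoint} compact minimal hypersurfaces of $S^k$, and then invoke Theorem~\ref{intersection of minimal hypersurfaces} for the contradiction. After a small generic perturbation of $\pmb{\nu}$ (which preserves the connectivity pattern of $M_+(\pmb{\nu})$), I would first arrange that $M$ meets the equator $S^{k-1}[\pmb{\nu}]$ transversally, so that $M \cap S^{k-1}[\pmb{\nu}]$ is a smooth codimension-two submanifold and each of its connected components lies in the boundary of exactly one component of $M_+(\pmb{\nu})$.

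Now suppose, towards a contradiction, that $M_+(\pmb{\nu})$ has at least two distinct connected components $A$ and $B$. Let $\sigma\colon S^k \to S^k$ be the isometric reflection across $S^{k-1}[\pmb{\nu}]$ and put
\begin{equation*}
\widetilde{A} := \overline{A} \cup \sigma(\overline{A}), \qquad \widetilde{B} := \overline{B} \cup \sigma(\overline{B}).
\end{equation*}
Since $\sigma$ is an isometry of $S^k$, the reflected pieces $\sigma(A), \sigma(B)$ are again minimal, so each of $\widetilde{A}, \widetilde{B}$ is a closed minimal hypersurface, smooth away from the equator. The crucial observation is that $\widetilde{A}$ and $\widetilde{B}$ are \emph{disjoint}: on $H_+(\pmb{\nu})$ they reduce to $A, B$; on $H_-(\pmb{\nu})$ to $\sigma(A), \sigma(B)$; and on $S^{k-1}[\pmb{\nu}]$ to $\partial A, \partial B$, which, by the transversality step, are disjoint unions of distinct components of $M \cap S^{k-1}[\pmb{\nu}]$. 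Applying Theorem~\ref{intersection of minimal hypersurfaces} to $\widetilde{A}, \widetilde{B}$ with any $\pmb{\nu}' \in S^k$ would then yield $\widetilde{A} \cap \widetilde{B} \cap \overline{H_+(\pmb{\nu}')} \neq \emptyset$, a direct contradiction; hence $M_+(\pmb{\nu})$ must be connected.

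The main obstacle is the regularity of $\widetilde{A}, \widetilde{B}$ along the equator: the doubled surface $\widetilde{A}$ is of class $C^1$ (and smooth minimal) across $\partial A$ precisely when $M$ meets $S^{k-1}[\pmb{\nu}]$ orthogonally there, which generically fails, so $\widetilde{A}$ typically carries a codimension-one crease along $\partial A$. To invoke Theorem~\ref{intersection of minimal hypersurfaces} honestly I would either upgrade it to hold for stationary integral varifolds (whose proof still runs on a maximum-principle argument valid in that generality), or proceed directly: take a pair of points realising $d(\widetilde{A}, \widetilde{B})$, observe that the crease locus has codimension one in each doubled surface so one can select a minimising pair in the smooth minimal part, and then derive a contradiction via the usual Frankel second-variation argument for a length-minimising geodesic between smooth compact minimal hypersurfaces in a space of positive Ricci curvature. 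The transversality reduction is expected to be routine; the real crux is this crease-regularity issue at the equator.
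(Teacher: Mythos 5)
Your route is genuinely different from the paper's (which applies Theorem~\ref{intersection of minimal hypersurfaces} directly to two components of $M_+(\pmb{\nu})$ inside the closed hemisphere and then invokes embeddedness; no reflection is needed), but as it stands it has a real gap at exactly the point you flag as the crux. The doubled surfaces $\widetilde{A}=\overline{A}\cup\sigma(\overline{A})$ are not merely non-smooth along the crease: they are not even stationary integral varifolds there. Computing the first variation of $\widetilde{A}$ in the direction of a vector field $X$, the interior terms vanish by minimality of $A$ and $\sigma(A)$, but one is left with the boundary contribution $\int_{\partial A} \langle X, n_1+n_2\rangle$, where $n_1$ is the outward conormal of $\overline{A}$ and $n_2=d\sigma(n_1)$ that of $\sigma(\overline{A})$. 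Since $d\sigma$ fixes $T_pS^{k-1}[\pmb{\nu}]$ and reverses the direction normal to the equator, $n_1+n_2$ equals twice the projection of $n_1$ onto $T_pS^{k-1}[\pmb{\nu}]$, which vanishes only when $M$ meets the equator orthogonally at $p$. So your proposed fix of upgrading Theorem~\ref{intersection of minimal hypersurfaces} to stationary varifolds cannot apply: the objects you feed into it are not stationary, and no maximum-principle or first-variation argument for stationary objects is available for them.

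The fallback Frankel argument has the same problem in a different guise. The assertion that, because the crease has codimension one, a distance-minimising pair between $\widetilde{A}$ and $\widetilde{B}$ can be selected in the smooth part is not justified: the set of minimising pairs need not be open or of full dimension, and nothing prevents every minimising pair from having an endpoint on a crease (two ``tents'' are typically closest along their ridges). At a crease endpoint neither the first-variation orthogonality of the minimising geodesic nor the second-variation computation is available. A secondary but non-trivial issue is the opening claim that a small generic perturbation of $\pmb{\nu}$ preserves the connectivity pattern of $M_+(\pmb{\nu})$: components of $M\cap H_+(\pmb{\nu})$ can merge as the equator moves (for instance when two components' closures touch tangentially on $S^{k-1}[\pmb{\nu}]$), so the transversality reduction also needs an argument. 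The cleaner path, and the one the paper takes, is to avoid doubling altogether: apply the intersection theorem to the two components $A$, $B$ regarded as minimal hypersurfaces in $\overline{H_+(\pmb{\nu})}$, conclude that their closures must meet on the equator, and then use embeddedness of $M$ to see that they are in fact the same component.
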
   

\noindent Although this problem was originally proven in 1995 by Ros for the case of minimal surfaces in $S^3$ \cite{ros1995two}, it was long believed to be true, since it is a direct consequence of Yau's conjecture. More precisely, Yau has conjecture that the smallest positive eigenvalue of the Laplacian of $M$, $-\Delta_M$, is equal to $k-1$, provided that $M$ is an embedded minimal hypersurface of $S^k$. If we assume the conjecture to be true, then the two-piece property is an easy consequence of Courant's nodal theorem.  

Using the two-piece property and the fact that Lawson and Karcher-Pinkall-Sterling surfaces are invariant under some reflections in $S^3$,  Choe and Soret \cite{choe2009} have verified Yau's conjecture for each of these minimal surfaces of $S^3$. They also show that Ros' proof works in the general case of $S^k$, for $k\geq 3$. It is then worth to point out clearly that Theorem~\ref{twopiece property} in these notes consists of an alternative proof of Ros' result, while Theorem~\ref{intersection of minimal hypersurfaces} is new to the best of the author's knowledge.

Last, but not least, as an important tool in the proof of Theorem~\ref{intersection of minimal hypersurfaces}, we prove the lemma below, which is a very simple consequence of the maximum principle and, at the same time, interesting on itself due to its geometrical simplicity.

\begin{restatable}{lem}{primelem}\label{lemma1}
	Let $M^{k-1}_1$ and $M^{k-1}_2$ be two compact, immersed minimal hypersurfaces of $S^k$. For a point $\pmb{\nu} \in S^k$ and a number $r\leq \frac{\pi}{2}$, suppose that $M_i\cap B(\pmb{\nu}, r)\neq \emptyset$, for each $i \in \{1,2\}$, but
	\begin{equation*}
	M_1\cap M_2\cap \overline{B(\pmb{\nu}, r)} = \emptyset.
	\end{equation*}
	Moreover, let $\mathbf{a}\in M_1$ and $\mathbf{b}\in M_2$ be such that
	\begin{equation}\label{dist a and b}
	\dist_{\overline{B(\pmb{\nu}, r)}}(M_1, M_2) = \dist(\mathbf{a, b}).
	\end{equation}
	Then either $\mathbf{a}\in \partial B(\pmb{\nu}, r)$ or $\mathbf{b}\in \partial B(\pmb{\nu}, r)$.  
\end{restatable}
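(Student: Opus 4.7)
The plan is to argue by contradiction using the classical Frankel second-variation machinery, adapted to make sure everything happens strictly inside the ball. Suppose both $\mathbf{a}\in M_1$ and $\mathbf{b}\in M_2$ lie in the open ball $B(\pmb{\nu},r)$. Since $r\le \pi/2$, the open ball $B(\pmb{\nu},r)$ is a geodesically convex subset of $S^k$, so the minimizing geodesic $\gamma:[0,L]\to S^k$ from $\mathbf{a}$ to $\mathbf{b}$ remains inside $B(\pmb{\nu},r)$; moreover $L=\dist(\mathbf{a},\mathbf{b})>0$ because $M_1\cap M_2\cap\overline{B(\pmb{\nu},r)}=\emptyset$. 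The first variation of arclength, applied to variations that slide the endpoints along $M_1$ and $M_2$, forces $\gamma'(0)\perp T_{\mathbf{a}}M_1$ and $\gamma'(L)\perp T_{\mathbf{b}}M_2$.

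Next, I would take an orthonormal basis $\{e_1,\dots,e_{k-1}\}$ of $T_{\mathbf{a}}M_1$ and parallel-transport it along $\gamma$ to obtain vector fields $E_1,\dots,E_{k-1}$. Parallel transport preserves orthogonality to $\gamma'$, so $\{E_1(L),\dots,E_{k-1}(L)\}$ is an orthonormal basis of $T_{\mathbf{b}}M_2$. For each $i$ I construct a variation $\alpha_i(s,t)$ of $\gamma$ with variation field $E_i$, whose endpoint curves are $M_1$- and $M_2$-geodesics emanating from $\mathbf{a}$ and $\mathbf{b}$ respectively. Because $\gamma$ is contained in the open ball, these variations stay inside $\overline{B(\pmb{\nu},r)}$ for sufficiently small $s$, so they are admissible competitors in the minimization \eqref{dist a and b} and $L_i''(0)\ge 0$ for every $i$.

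Summing the second variation formula over $i$, the $|\nabla_{\gamma'}E_i|^2$ terms vanish by parallelism; the curvature integrand collapses to $-\operatorname{Ric}_{S^k}(\gamma',\gamma')=-(k-1)$ using the constant sectional curvature of $S^k$; and the boundary terms $\sum_i\langle II_{M_2}(E_i,E_i),\gamma'(L)\rangle-\sum_i\langle II_{M_1}(E_i,E_i),\gamma'(0)\rangle$ vanish because $M_1$ and $M_2$ are minimal. This yields $\sum_i L_i''(0)=-(k-1)L<0$, contradicting $L_i''(0)\ge 0$. Hence at least one of $\mathbf{a},\mathbf{b}$ must lie on $\partial B(\pmb{\nu},r)$. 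The main obstacle is the very first step, namely certifying that the minimizing geodesic and its small perturbations sit strictly in $B(\pmb{\nu},r)$: this is exactly why the hypothesis $r\le \pi/2$ appears, since geodesic convexity of $B(\pmb{\nu},r)$ in $S^k$ fails for larger radii and, without it, a minimizing path could graze $\partial B(\pmb{\nu},r)$ and spoil the Frankel argument precisely in the boundary case that the lemma refuses to exclude.
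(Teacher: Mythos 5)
Your argument is correct, but it takes a genuinely different route from the paper. The paper does not use Frankel's second-variation computation at all: it assumes (after disposing of the case where $M_1$ is a rigid motion of $M_2$) that both $\mathbf{a}$ and $\mathbf{b}$ are interior, rotates $M_1$ in the plane spanned by $\mathbf{a}$ and $\mathbf{b}$ until the rotated copy $M_1(\theta^*)$ first touches $M_2$ inside $\overline{B(\pmb{\nu},r)}$, observes that because $\mathbf{a},\mathbf{b}$ realize the distance this first contact occurs at the interior point $\mathbf{b}$ with $M_1(\theta^*)$ lying on one side of $M_2$, and then invokes the maximum principle to force $M_1(\theta^*)=M_2$, contradicting the non-congruence assumption. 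Your proof instead localizes Frankel's argument: first variation gives orthogonality of the minimizing geodesic at both feet, parallel transport of a tangent frame plus minimality kills the gradient and boundary terms, and $\operatorname{Ric}_{S^k}>0$ makes the summed second variation negative, contradicting $L_i''(0)\ge 0$. You correctly identify the one place where interiority of $\mathbf{a}$ and $\mathbf{b}$ enters (admissibility of the sliding variations) and where $r\le\pi/2$ enters (convexity of the ball, so the geodesic and its perturbations stay admissible). What each approach buys: yours is self-contained, needs no case split for congruent hypersurfaces, and generalizes verbatim to convex balls in any ambient manifold of positive Ricci curvature; the paper's rotation-plus-maximum-principle argument is softer analytically and matches the sweep-out technique used in the rest of the paper (in particular it previews the mechanism of the main theorem), at the cost of handling the rigid-motion case separately and relying on the strong maximum principle for tangential contact.
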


\noindent I would like to thank many people for their support and help during the preparation of this paper. I am in great debit with J\"urgen Jost for his constant support and all the discussions. I thank Slava Matveev for the discussions and the ideas for some of the pictures in the present paper. I thank Jingyong Zhu for all useful comments. I thank Luciano Mari for suggesting me some literature. Last, but not least, I thank Thomas Endler and Caio Teodoro for transforming my amateur hand drawn pictures into beautiful images.

\section{Main tools}

Let $M$ be a $(k-1)$-dimensional hypersurface in $S^k$, and let $\pmb{\nu}$ be a unit normal vector field along $M$. The extrinsic curvature of $M$ is described by the so-called second fundamental form of $M$. It is a symmetric two-tensor defined by 
\begin{equation*}
h(\mathbf{e_i},\mathbf{e_j}) = \langle D_{e_i}\pmb{\nu},\mathbf{e_j}\rangle,
\end{equation*}
where $\{e_i\}_{i=1}^{k-1}$ is an orthonormal basis of tangent vectors to $M$. The eigenvalues of such $h$ are called the principal curvatures of $M$ and the \textit{mean curvature} of $M$ is defined to be the sum of such eigenvalues
\begin{equation*}
H =\frac{1}{k-1} \left(\lambda_1 + ... + \lambda_{k-1}\right) =  \frac{1}{k-1}\sum_{i=1}^{k-1} \langle D_{e_i}\pmb{\nu},\mathbf{e_i}\rangle
\end{equation*}

\begin{defn*}
	A  hypersurface $M$ in $S^k$ is called minimal if its mean curvature vanishes identically.
\end{defn*}

\noindent The easiest example of such hypersurfaces are the $(k-1)$-dimensional equators, and they are going to play a significant role in the present paper: Let $\pmb{\nu}\in S^k$ be a point and define the following set.
\begin{equation}
S^{k-1}[\pmb{\nu}]=\{\mathbf{p} \in S^k\subset \R^{k+1}: \langle \mathbf{p}, \pmb{\nu}\rangle = 0\}.
\end{equation} 

\noindent The principal curvatures of this surface are all equal to zero.

When $k=3$, there are very classical and important examples of minimal surface in $S^3$, such as the Clifford torus, defined by the equation below.
\begin{equation*}
\Sigma = \{\mathbf{x}\in S^3 : x_{1}^2+x_{2}^2=x_{3}^2+x_{4}^2 =\frac{1}{2}\}. 
\end{equation*}
In this case, the principal curvatures are $1$ and $-1$, and therefore the mean curvature is zero.

Although a huge progress has been done in the past decade, not a great variety of explicit examples of minimal hypersurfaces in $S^k$ are known, specially when $k>3$. It is always worth to mention the most classical ones for the case of $k=3$: Lawson \cite{lawson70} constructed compact embedded minimal surfaces of arbitrary genus, and Karcher-Pinkall-Sterling \cite{karcher88} added some more similar ones. Both in $R^3$ and in $S^3$, the lack of many explicit examples has been a main obstacle to the study of embedded minimal surfaces. In 2014, Kapouleas \cite{kapouleas2017} has given the first examples of minimal surfaces in $S^3$ that are not symmetric under the antipodal map. Indeed, for each $l\geq 2$, he constructs minimal surfaces invariant under a $\Z_l$-rotation in $S^3$. Because of such symmetries, we claim that Choe and Soret's method \cite{choe2009} suffice to compute the first eingenvalues of the Laplacian for Kapouleas' surfaces. We suggest the reader to check the references in Kapouleas \cite{kapouleas2017}, as well as the work and references in Ketover \cite{ketover2016} for an overview of recent result.  %% A curious fact about the examples of Lawson, Karcher et al., the Clifford torus and all other classically known compact embedded minimal surfaces in $S^3$ is that they have a symmetry property: They are invariant under the antipodal map!

Analogously to the $(k-1)$-equators, we want to establish the following notation. Let $M^{k-1}$ be a compact, embedded, minimal hypersurface of the standard sphere $S^k$. For a given point $\pmb{\nu} \in S^k$ in addition to $S^{k-1}[\pmb{\nu}]$ defined above, we also consider the following hemispheres, with respect to $\pmb{\nu}$, and intersections.

\begin{equation}
H_{+}(\pmb{\nu}):= \{\mathbf{p} \in S^k: \langle \mathbf{p},\pmb{\nu} \rangle > 0 \},
\end{equation}
\begin{equation}
\overline{H_{+}(\pmb{\nu})}:= H_{+}(\pmb{\nu}) \cup S^{k-1}[\pmb{\nu}],
\end{equation}
\begin{equation}\label{def of M_+} 
M_{+}(\pmb{\nu}):= M\cap H_{+}(\pmb{\nu}),
\end{equation} 
\begin{equation}\label{def of M_+ bar}
\overline{M_{+}(\pmb{\nu})}:= M\cap  \overline{H_{+}(\pmb{\nu})}.
\end{equation}

\noindent In Equation~\eqref{def of M_+} and Equation~\eqref{def of M_+ bar}, we may omit $\pmb{\nu}$ if it is implicit that we are looking at the hemisphere of a fixed $\pmb{\nu}\in S^k$. We use the sigh `$-$' to denote the other hemisphere, \textit{e.g.}, $H_{-}(\pmb{\nu}):= \{\mathbf{p} \in S^k: \langle \mathbf{p},\pmb{\nu} \rangle < 0 \}$.

The strict maximum principle has several useful consequences and it will play the most fundamental role in the proof of every theorem herein. On the theory of minimal surfaces, the following version is particularly useful.

\begin{theorem*}[Maximum principle for minimal hypersurfaces]
Let $N$ be an $(n+ 1)$-dimensional smooth Riemannian manifold without boundary. Consider two connected minimal hypersurfaces $M_1^n$ and $M_2^n$ of $N$, at least one point $\mathbf{p}\in M_1\cap M_2$ and with the property that locally near each of their common points, one hypersurface lies on one side of the other. Then $M_1 = M_2$.
\end{theorem*}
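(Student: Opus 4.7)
My approach is proof by contradiction: suppose that both $\mathbf{a}$ and $\mathbf{b}$ lie in the \emph{open} ball $B(\pmb{\nu},r)$ and derive a clash with the strict maximum principle quoted at the end of the section. The philosophy is classical Frankel: equidistant hypersurfaces of a minimal hypersurface acquire strictly positive mean curvature in a space of positive sectional curvature, which rules out a minimal ``neighbour'' lying on the far side.

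First, since $r\le\pi/2$, the closed ball $\overline{B(\pmb{\nu},r)}$ is geodesically convex in $S^k$, so the minimizing geodesic $\gamma:[0,\ell]\to S^k$ from $\mathbf{b}$ to $\mathbf{a}$, where $\ell=\dist(\mathbf{a},\mathbf{b})$, stays inside $\overline{B(\pmb{\nu},r)}$. The hypothesis $M_1\cap M_2\cap\overline{B(\pmb{\nu},r)}=\emptyset$ forces $\ell>0$, and because $\mathbf{a}$ and $\mathbf{b}$ are interior critical points of the distance functional between $M_1$ and $M_2$, the first variation formula shows that $\gamma$ meets $M_1$ orthogonally at $\mathbf{a}$ and $M_2$ orthogonally at $\mathbf{b}$. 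Moreover, since $\gamma$ realises $\dist(\mathbf{a},M_2)$, it contains no focal point of $M_2$ in its open interior.

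Next I would introduce the parallel hypersurface $M_2^t$ obtained by flowing a neighbourhood of $\mathbf{b}$ in $M_2$ normally in the direction of $\dot\gamma(0)$. For $t=\ell$, this $M_2^\ell$ is a smooth hypersurface in a neighbourhood of $\mathbf{a}$ with unit normal $\dot\gamma(\ell)$ there (leaving the degenerate case $\mathbf{a}$ focal to be handled separately). Its shape operator $A_t$ along $\gamma$ satisfies the matrix Riccati equation
\begin{equation*}
A_t' = A_t^{2} + I
\end{equation*}
in $S^k$, the identity term coming from the constant sectional curvature $+1$. Using $\tr(A_0)=0$ (minimality of $M_2$) together with Cauchy--Schwarz, the trace $h(t):=\tr(A_t)$ satisfies $h'(t)\ge h(t)^{2}/(k-1)+(k-1)$, so comparison with the closed-form solution $(k-1)\tan(t)$ of the corresponding equality yields $h(\ell)\ge(k-1)\tan(\ell)>0$. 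Hence the mean curvature of $M_2^\ell$ at $\mathbf{a}$, with respect to $\dot\gamma(\ell)$, is strictly positive.

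Finally, orthogonality shows that $M_2^\ell$ and $M_1$ share the same tangent hyperplane at $\mathbf{a}$. Because every point of $M_1$ has distance to $M_2$ at least $\ell$, the hypersurface $M_1$ lies locally on the $\dot\gamma(\ell)$-side of $M_2^\ell$. Writing the two hypersurfaces as graphs $\xi_k=u(\xi')$ (for $M_2^\ell$) and $\xi_k=v(\xi')$ (for $M_1$) in normal coordinates centred at $\mathbf{a}$, with both gradients vanishing at the origin and $v\ge u$, the Hessian comparison at the local minimum of $v-u$ gives
\begin{equation*}
\Delta v(0)\ \ge\ \Delta u(0).
\end{equation*}
Translating back via the critical-point formula for mean curvature as graphs in $S^k$, this reads $0=H(M_1)(\mathbf{a})\ge H(M_2^\ell)(\mathbf{a})>0$, a contradiction. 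This argument is exactly the comparison form of the maximum principle that generalises the strict version stated at the end of the section. The main technical point I expect to worry about is the focal-point subtlety: if $\ell$ coincides with the focal distance of $M_2$ along $\gamma$ then $M_2^\ell$ fails to be smooth at $\mathbf{a}$. This degenerate case can be dispatched either by a short limiting argument on $M_2^{\ell-\varepsilon}$ as $\varepsilon\to 0^+$, using the monotonicity of $h$ on $[0,\ell)$, or by noting that such blow-up of $h$ immediately contradicts the smoothness of $M_1$ at $\mathbf{a}$.
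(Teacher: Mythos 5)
Your proposal does not prove the stated theorem; it proves a different result of the paper, namely Lemma~\ref{lemma1} (that the distance between two minimal hypersurfaces which are disjoint inside $\overline{B(\pmb{\nu},r)}$ must be realized at a boundary point of the ball). The objects $\mathbf{a}$, $\mathbf{b}$, $B(\pmb{\nu},r)$ you work with belong to that lemma, whereas the statement at hand is the tangency (strong maximum) principle: two connected minimal hypersurfaces of an arbitrary Riemannian manifold $N$ which share a point, with one lying locally on one side of the other near each common point, must coincide. The paper quotes this as a classical background fact without proof, and your proposal in fact presupposes it (``derive a clash with the strict maximum principle quoted at the end of the section''), so it cannot serve as a proof of it.

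Beyond the mismatch of statements, the analytic step at the end of your argument is precisely the one that fails in the setting of the theorem. Writing the two hypersurfaces as graphs $u$ and $v$ over the common tangent plane at the touching point, with $v\ge u$ and both gradients vanishing there, your comparison yields a contradiction only because your second hypersurface $M_2^{\ell}$ has \emph{strictly} positive mean curvature (being an equidistant of a minimal hypersurface in positive curvature): you get $0=H(M_1)\ge H(M_2^{\ell})>0$. In the theorem both hypersurfaces are minimal, so the same comparison gives only $0\ge 0$ and no contradiction. The correct argument subtracts the two minimal-surface equations to obtain a linear, uniformly elliptic equation without zeroth-order term for $w=v-u$, applies E.~Hopf's strong maximum principle to conclude $w\equiv 0$ near the touching point, and then observes that the coincidence set is open and closed in each hypersurface, so connectedness gives $M_1=M_2$ globally; no curvature hypothesis on $N$ enters. (As a proof of Lemma~\ref{lemma1}, your Frankel-type first/second-variation and Riccati comparison argument is sound and is a genuine alternative to the paper's rotation-until-first-contact argument, but that is not the statement you were asked to prove.)
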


\noindent In the remaining of this section, we give an alternative proof of Theorem~\ref{minimal intersects half equator}, with the following difference from the original one in Jost-Xin-Yang: Instead of arguing like the authors and constructing a Hildebrandt type strictly convex function on a given compact subset of the complement of $S^{k-1}_{+,\pmb{\eta}}[\pmb{\nu}]$, we apply a rotation argument, where we consider an isometry given by the rotation over a fixed plane, chosen so that the complement of $S^{k-1}_{+,\pmb{\eta}}[\pmb{\nu}]$ is foliated by a $1$-parameter family of $(k-1)$-equators. The reason we present this proof is because it builds intuition for the proof of Theorem~\ref{intersection of minimal hypersurfaces}.

\primelemma*

\begin{proof}
	Suppose Theorem~\ref{minimal intersects half equator} is false, that is, there exist $\pmb{\nu}, \pmb{\eta}\in S^k$ and a compact immersed minimal hypersurface $M^{k-1}$ of $S^k$ that does not intersect $S^{k-1}_{+,\pmb{\eta}}[\pmb{\nu}]$.
	
	Consider a rotation on the plane $\pi(\pmb{\nu}, \pmb{\eta})$, generated by $\pmb{\nu}$ and $\pmb{\eta}$, defined by 
	\begin{equation}\label{definition rotation}
	\begin{split}
	\rho_{\pmb{\nu}, \pmb{\eta}}(\theta, \cdot\,):S^k \to &S^k\\
	\mathbf{p} \mapsto & \left[ 
	\begin{array}{c@{}c@{}}
	\left[\begin{array}{cc}
	\cos(\theta) & -\sin(\theta) \\
	\sin(\theta) & \cos(\theta) \\
	\end{array}\right] &  \mathbf{0} \\
	\mathbf{0} & \left[\begin{array}{ccc}
	1 &  & \mathbf{0}\\ 
	 & \ddots & \\
	\mathbf{0} &  & 1\\
	\end{array}\right]\\
	\end{array}\right]\cdot \mathbf{p}
	\end{split}
	\end{equation}
	
	\noindent In the definition, $\mathbf{p}$ is seen as a point in $\R^{k+1}$ and written as follows. 
	\begin{equation*}
	\mathbf{p} = \mathbf{p}_\nu\cdot\pmb{\nu} + \mathbf{p}_\eta\cdot\pmb{\eta} + \sum_{i=1}^{k-1}\mathbf{p}_i\cdot \mathbf{e_i},
	\end{equation*}
	where $\left\{\mathbf{e_i}\right\}_{i=1}^{k-1}$ is an oriented orthonormal basis such that 
	\begin{equation*}
	\test\left\{\pmb{\nu}, \pmb{\eta}, \mathbf{e_1}, \cdots, \mathbf{e_{k-1}}\right\} = \R^{k+1}.
	\end{equation*}
	
	\noindent Let us define as well a $1$-parameter family generated by the rotation and $S^{k-1}_{+, \pmb{\eta}}[\pmb{\nu}]$ by the expression below.
	\begin{equation}
	S^{k-1}_{+, \pmb{\eta}}[\pmb{\nu}]\left(\theta\right) := \rho\left(\theta, S^{k-1}_{+, \pmb{\eta}}[\pmb{\nu}]\right).
	\end{equation}
	Since rotations are isometries of the sphere, for each $\theta\in S^1$, we have that $S^{k-1}_{+, \pmb{\eta}}[\pmb{\nu}]\left(\theta\right)$ is also totally geodesic. Moreover, the boundaries of each element of the above $1$-parameter family are the same. 
	\begin{equation}\label{boundary of half equator}
	\partial S^{k-1}_{+, \pmb{\eta}}[\pmb{\nu}]\left(\theta\right)= S^{k-2}[\pmb{\nu}, \pmb{\eta}].
	\end{equation}

	\noindent Since we are assuming that $M$ does not intersect $S^{k-1}_{+, \pmb{\eta}}[\pmb{\nu}]$ and $S^{k-1}_{+, \pmb{\eta}}[\pmb{\nu}]\left(\theta\right)$ foliates $S^k\setminus S^{k-2}[\pmb{\nu}, \pmb{\eta}]$, there exists $\theta^* > 0$ such that
	\begin{equation*}
	S^{k-1}_{+, \pmb{\eta}}[\pmb{\nu}]\left(\theta^*\right)\cap M \neq \emptyset,
	\end{equation*}
	and for a small $\epsilon > 0$
	\begin{equation*}
	S^{k-1}_{+, \pmb{\eta}}[\pmb{\nu}]\left(\theta^*-\epsilon\right)\cap M = \emptyset.
	\end{equation*}
	The intersection between these two hypersurfaces  must be an interior point of $S^{k-1}_{+, \pmb{\eta}}[\pmb{\nu}]\left(\theta^*\right)$, by Equation~\eqref{boundary of half equator}. But this contradicts the transversality of the intersection between minimal hypersurfaces given by the maximum principle.
\end{proof}

\begin{figure}	
	\begin{picture}(100,100)
	\put(120,0){\includegraphics[width=0.5\linewidth]{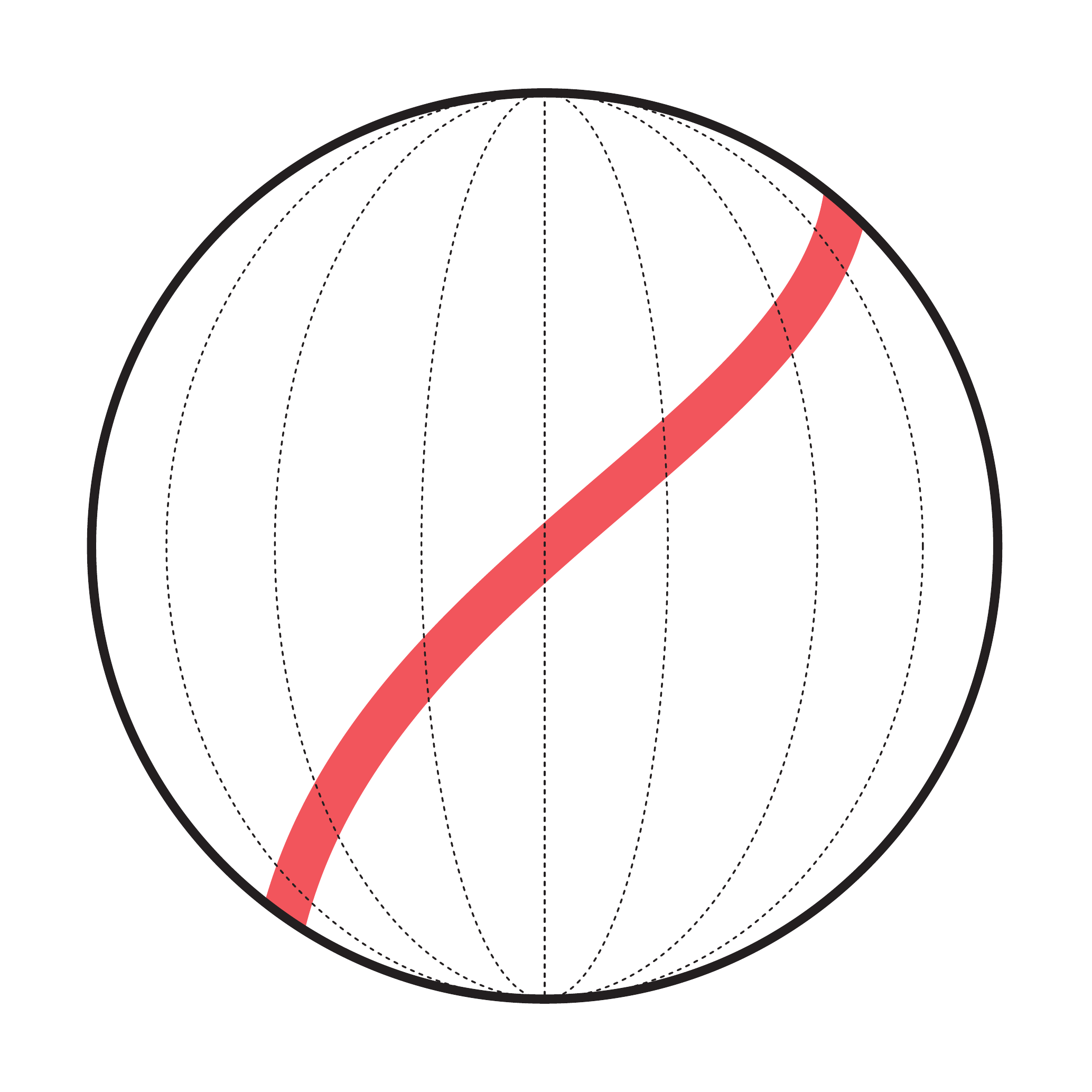}}
	\put(155,20){\color{red}M}
	\put(310,100){$S^{k-1}_{+, \pmb{\eta}}[\pmb{\nu}]$}
	\put(152,100){$S^{k-1}_{+, \pmb{\eta}}[\pmb{\nu}](\theta)$}
	\put(197,0){$S^{k-1}_{+, \pmb{\nu}}[-\pmb{\eta}]$}
	\end{picture}
	\caption{Projection of a hemisphere and a fixed foliation given by $(k-1)$-equators $S^{k-1}_{+, \pmb{\eta}}[\pmb{\nu}](\theta)$ (punctured lines) into the disk. A given minimal hypersurface $M$ (in red) must cross each of the leaves transversely and its boundary be in both sides of the central leaf $S^{k-1}_{+, \pmb{\nu}}[-\pmb{\eta}]$.}
	\label{Fig3}
\end{figure}

Theorem~\ref{minimal intersects half equator} itself is a very interesting result, and perhaps its direct applications wider than the ones of Theorem~\ref{intersection of minimal hypersurfaces}. For example, it indicates straightforwardly an `equilibrium' property of compact embedded minimal hypersurfaces of $S^k$ (immersed is unnecessarily general for what follows). 

Let us fix $\pmb{\nu}\in S^k$, and observe that for each point $\pmb{\eta}\in S^{k-1}[\pmb{\nu}]$, analogously to what we have done in the proof of Theorem~\ref{minimal intersects half equator}, we can define a foliation of the open half hemisphere $H_+(\pmb{\nu})$ using the $S^{k-1}_{+,\pmb{\eta}}[\pmb{\nu}]$ as leaves and vary $\pmb{\eta}$ in the direction of $\pmb{\nu}$ using the rotation defined by Equation~\eqref{definition rotation}. When $\rho_{\pmb{\nu}, \pmb{\eta}}(\theta,\pmb{\eta}) = \pmb{\nu}$, the corresponding leaf is dividing the $H_+(\pmb{\nu})$ in two equal parts.

Consider a compact embedded minimal hypersurface $M$ of $S^k$ and its intersection with $H_+(\pmb{\nu})$, denoted by $M_+$ (see Equation~\eqref{def of M_+}). On one hand, by the maximum principle, $M_+$ has to intersect transversaly each leaf of the foliation, and therefore must be in `both sides' of the central leaf when $\rho_{\pmb{\nu}, \pmb{\eta}}(\theta,\pmb{\eta}) = \pmb{\nu}$. On the other hand, the previous argument works regardless of which foliation by totally geodesic $(k-1)$-equators we are choosing, so we have points of $M_+$ in `both sides' of any foliation we may pick. Figure~\ref{Fig3} illustrates the projection to the disk of appropriate dimension of one of such foliations, and a minimal hypersurface $M$ going from one side of the boundary of $H_+(\pmb{\nu})$ to the other.

\section{Proof of the main results}

In this section we present the proves of Theorem~\ref{intersection of minimal hypersurfaces} and Theorem~\ref{twopiece property}. Since it will be important in the proof of the first theorem, we start by proving Lemma~\ref{lemma1}, which is a direct consequence of the maximum principle, but of independent interest, as we said in the introduction.

\primelem*

\begin{proof}
	If $M_1$ is a rotation of $M_2$, then their distance can be realized by points in the boundary of both hypersurfaces.
	
	Suppose that $M_1$ is not a rigid motion of $M_2$ and the lemma is false. That is, under the hypotheses for $M_1$, $M_2$, with $\mathbf{a}\in M_1$ and $\mathbf{b}\in M_2$ satisfying Equation~\eqref{dist a and b}, we have that $\mathbf{a}$ and $\mathbf{b}$ are interior points, i.e., $\mathbf{a}, \mathbf{b}\in B(\pmb{\nu}, r)$. See Figure~\ref{Fig4}.
	
	Consider the plane $\pi(\mathbf{a},\mathbf{b})$ generated by $\mathbf{a}$ and $\mathbf{b}$, and let $\{\mathbf{a}, \bar{\mathbf{b}}\}$ be the orthonormal basis given by Gram-Schmidt applied to $\{\mathbf{a}, \mathbf{b}\}$. Analogously to the proof of Theorem~\ref{minimal intersects half equator}, let us define the rotation $\rho_{\mathbf{a}, \bar{\mathbf{b}}}(\theta, \cdot)$ like in Equation~\eqref{definition rotation}.
	
	By the choice of the rotation, there exists $0 < \theta^* < \frac{\pi}{2}$ such that the hypersurface defined as
	\begin{equation*}
	M_1(\theta^*):= \rho_{\mathbf{a}, \bar{\mathbf{b}}}(\theta^*, M_1),
	\end{equation*}
	 intersects $M_2$ for the first time in $\overline{B(\pmb{\nu}, r)}$. Since $\mathbf{a}$ and $\mathbf{b}$ realize the distance between $M_1$ and $M_2$, the above intersection happens at the point $\mathbf{a}(\theta^*):=\rho_{\mathbf{a}, \bar{\mathbf{b}}}(\theta^*, \mathbf{a}) = \mathbf{b}$, an interior point. Therefore, $M_1(\theta^*)$ is minimal and lies on one side of $M_2$ in a neighborhood of $\mathbf{b}$, which implies that $M_1(\theta^*) = M_2$ by the maximum principle. But this contradicts our assumption that $M_1$ is not a rigid motion of $M_2$. 
\end{proof}

\begin{figure}
	\begin{picture}(150,150)
	\put(90,0){\includegraphics[width=0.5\linewidth]{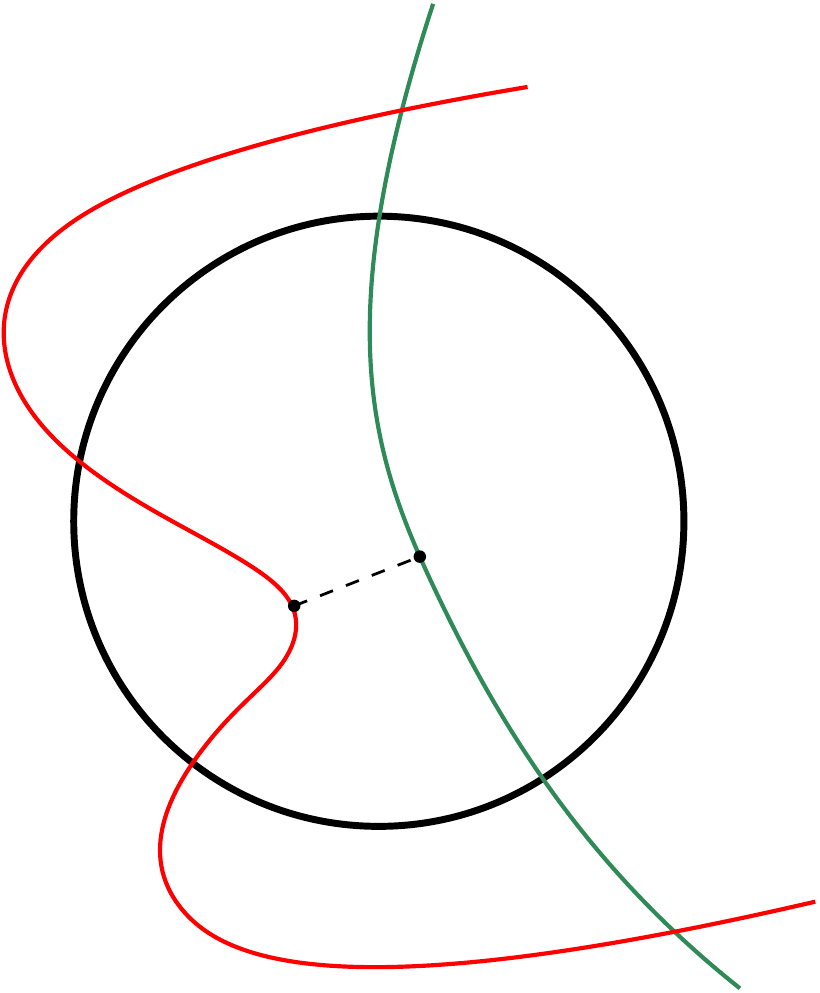}}
	\put(163,83){$\mathbf{a}$}
	\put(195,100){$\mathbf{b}$}
	\put(245,23){$\color{PineGreen!80!black}{M_2}$}
	\put(110,23){$\color{red!90!black}{M_1}$}
	\put(170,26){$B(\pmb{\nu}, r)$}
	\end{picture}
	\caption{The minimal hypersurfaces $M_1$ and $M_2$ of $S^k$ do not intersect inside the ball (they do intersect outside by Frankel's Theorem).}
	\label{Fig4}
\end{figure}

\primelemmaa*

\begin{proof}
	We start assuming that neither $M_1$ nor $M_2$ is totally geodesic, otherwise we are on the hypotheses of Theorem~\ref{minimal intersects half equator}. Moreover, if $M_1$ is a rigid motion of $M_2$, the result follows directly, so we assume that is not the case. 
	
	We argue by contradiction. Suppose there is no intersection between $M_1$ and $M_2$ in the hemisphere, that is, $M_1\cap M_2\cap \overline{H_{+}(\pmb{\nu})} = \emptyset$.
	
	By Lemma~\ref{lemma1}, without loss of generality, there exists $\mathbf{a} \in M_1\cap S^{k-1}[\pmb{\nu}]$ such that 
	\begin{equation*}
	\dist_{H_+(\pmb{\nu})}(\mathbf{a}, M^+_2) = \dist_{H_+(\pmb{\nu})}(M_1, M_2).
	\end{equation*}
	
	\noindent Analogously to the proof of Theorem~\ref{minimal intersects half equator}, we will define a rotation of $M_1$ that keeps $\mathbf{a}$ invariant and avoids the first intersection point to be a point at the boundary of $\overline{H_{+}(\pmb{\nu})}$. Actually, we are going to consider from now on just the connected component of $M_1$ in the hemisphere that contains $\mathbf{a}$ and regard it as being an embedding (the immersed case comes as corollary of this proof). We still denote it by $M_1$ to not introduce new notation. See Figure~\ref{Fig2}.
	
	\begin{figure}
		\begin{picture}(0,150)
		\put(100,00){\includegraphics[width=0.5\linewidth]{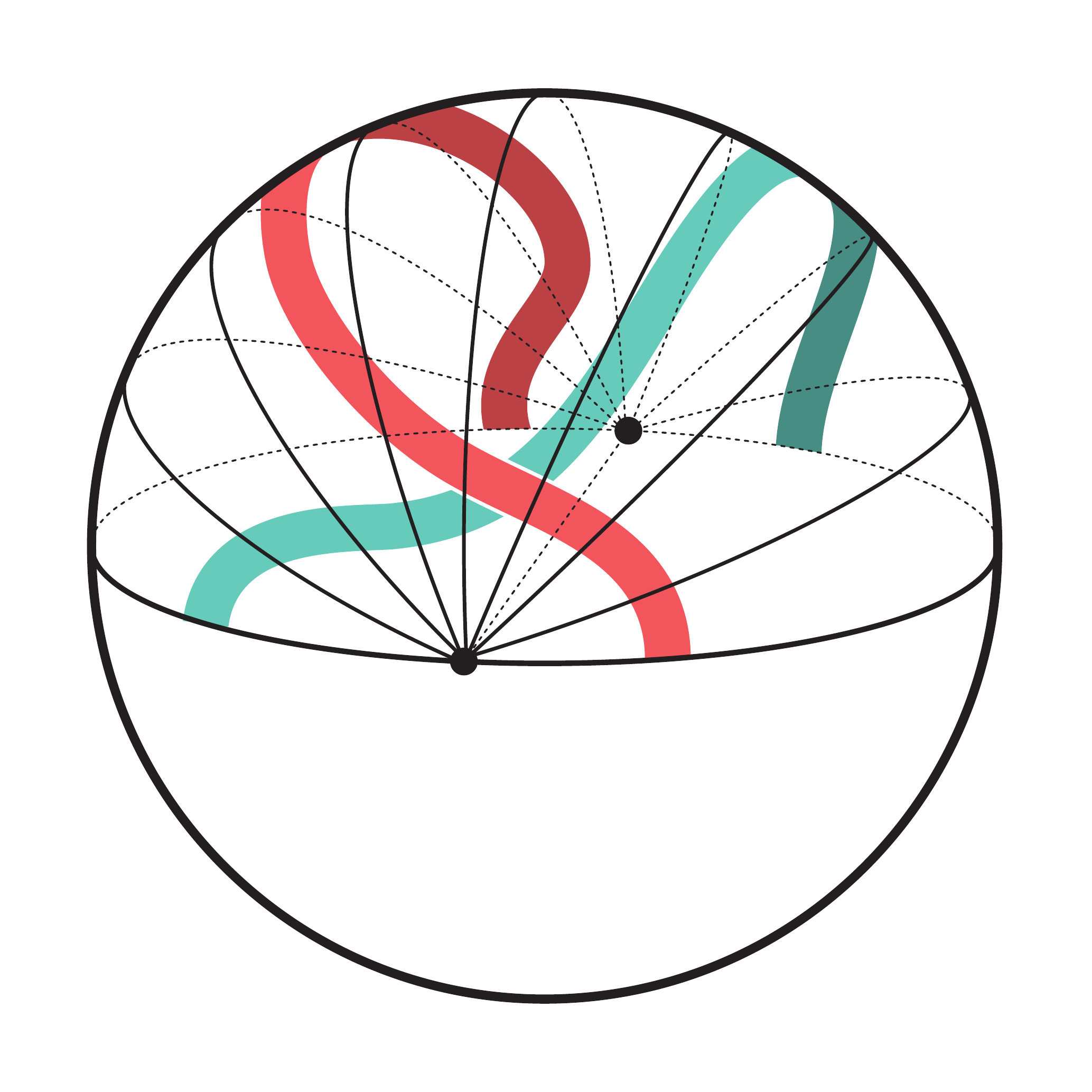}}
		\put(150,170){$\mathbf{\tilde{a}}$}
		\put(242,170){$\mathbf{\tilde{b}}$}
		\put(215,68){$\mathbf{a}$}
		\put(130,68){\color{JungleGreen!95!black}{$M_2^+$}}
		\put(225,83){\color{red!85!black}{$M_1^+$}}
		\end{picture}
		\caption{Take a foliation by $(k-1)$-equators of the hemisphere. Like in Figure~\ref{Fig3}, the minimal hypersurfaces $M_1$ and $M_2$ must intersect every leaf of the foliation transversaly, but we must imagine that, in very high dimensions, it is not obvious they do intersect.}
		\label{Fig2}
	\end{figure}
	
	Consider $S^{k-1}[\mathbf{a}]\cap\overline{H_{+}(\pmb{\nu})}$ and denote it by $S^{k-1}_{+, \mathbf{a}}[\pmb{\nu}]$. Since $M_1\cap S^{k-1}_{+, \mathbf{a}}[\pmb{\nu}] \neq \emptyset$ and $M_2\cap S^{k-1}_{+, \mathbf{a}}[\pmb{\nu}] \neq \emptyset$ by Theorem~\ref{minimal intersects half equator}, and both sets are compact, there exist points $\mathbf{\tilde{a}}\in M_1\cap S^{k-1}_{+, \mathbf{a}}[\pmb{\nu}]$ and $\mathbf{\tilde{b}}\in M_2\cap S^{k-1}_{+, \mathbf{a}}[\pmb{\nu}]$ such that they realize the distance between $M_1^+$ and $M_2^+$ inside $S^{k-1}_{+, \mathbf{a}}[\pmb{\nu}]$.
	
	Let $\pi(\mathbf{\tilde{a}}, \mathbf{\tilde{b}})$ be the plane generated by $\mathbf{\tilde{a}}$ and $\mathbf{\tilde{b}}$, and apply Gram-Schmidt to the latter, getting an orthonormal basis $\{\mathbf{\tilde{a}}, \mathbf{\bar{b}}\}$ for $\pi(\mathbf{\tilde{a}}, \mathbf{\tilde{b}})$. For $\theta \in [0,\pi]$, we define a rotation
	\begin{equation*}
	\rho_{\mathbf{\tilde{a}}, \mathbf{\bar{b}}}(\theta, \cdot): S^k \longrightarrow S^k
	\end{equation*} 
	on the above plane by Equation~\eqref{definition rotation}. This rotation keeps the $(k-2)$-equator $S^{k-2}[\mathbf{\tilde{a}}, \mathbf{\bar{b}}]$ invariant, and therefore $\mathbf{a}\in M_1(\theta)$, for every $\theta\in [0, \pi]$.
	
	Since we are considering just a connected component of the minimal hypersurface $M_1$, we write
	\begin{equation*}
	H_+(\pmb{\nu})\setminus M_1 := \Sigma \sqcup \tilde{\Sigma},
	\end{equation*} 
	where $\Sigma$ and $\tilde{\Sigma}$ are disjoint open sets with mean convex boundary. Since $M_2 \cap M_1 \cap H_+(\pmb{\nu}) = \emptyset$, suppose, without loss of generality, that $M_2\cap \Sigma = \emptyset$.
	
	Define a  $1$-parameter family of sets as follows.
	\begin{equation*}
	\Sigma(\theta) := \rho_{\mathbf{\tilde{a}}, \mathbf{\bar{b}}}(\theta, \Sigma).
	\end{equation*}
	Since $\rho_{\mathbf{\tilde{a}}, \mathbf{\bar{b}}}(\theta, \cdot)$ keeps $\mathbf{a}\in S^{k-1}[\pmb{\nu}]$ fixed, while moves $M_1$ to $M_2$ in the hemisphere $H_+(\pmb{\nu})$, and $\Sigma(\theta)$ has mean convex boundary for every $\theta$, the first point of intersection 
	\begin{equation*}
	\partial\Sigma(\theta^*) \cap M_2^+ \neq \emptyset,
	\end{equation*}
	between the $1$-parameter family and $M_2^+$, happens in a point $\mathbf{p}(\theta^*):=\rho_{\mathbf{\tilde{a}}, \mathbf{\bar{b}}}(\theta^*, \mathbf{p})\in\partial\Sigma(\theta^*)$ such that $\mathbf{p}\in M_1\cap H_+(\pmb{\nu})$. Since every $M_1(\theta)$ is a minimal hypersurface, the maximum principle implies that $M_1(\theta^*) = M_2$. Therefore $M_1$ is a rotation of $M_2$ and this contradicts our initial assumptions.

\end{proof}

\primelemmaaa*

\begin{proof}
	If $M$ is totally geodesic, the statement is trivial. So we suppose $M$ is a compact embedded minimal hypersurface of $S^k$, not an equator.
	Suppose there exists $\pmb{\nu}\in S^k$ such that
	\begin{equation*}
	M\cap H_+(\pmb{\nu}) = M_1 \sqcup M_2 \,\,\left(\,\sqcup \,\mathcal{R}\right),
	\end{equation*}
	where $M_1$ and $M_2$ are disjoint connected, compactly embedded minimal hypersurfaces in $\overline{H_{+}(\pmb{\nu})}$. By Theorem~\ref{intersection of minimal hypersurfaces}, if
	\begin{equation*}
	M_1\cap M_2 \cap H_+(\pmb{\nu}) = \emptyset,
	\end{equation*}
	then 
	\begin{equation*}
	M_1\cap M_2 \cap S^{k-1}[\pmb{\nu}] \neq \emptyset.
	\end{equation*}
	
	Since $M$ is an embedded minimal hypersurface, we have that $M_1$ and $M_2$ are actually the same geometric object in $H_+(\pmb{\nu})$. The fact that $M$ is compact guarantees that we need to repeat the above argument just a finite number of times to disregard possible disconnected components of $\mathcal{R}$.

\end{proof}

\bibliographystyle{alpha}
\bibliography{bernstein}
\addcontentsline{toc}{section}{\bibname}

\end{document}